\newtheorem{theorem}{Theorem}
\theoremstyle{plain}
\newtheorem{remark}[theorem]{Remark}
\numberwithin{equation}{section}
\begin{document}
\title[restricted weak type]{A restricted weak type inequality with
application to a $Tp$ theorem and optimal cancellation conditions for $CZO$'s%
}
\author[E.T. Sawyer]{Eric T. Sawyer}
\address{ Department of Mathematics \& Statistics, McMaster University, 1280
Main Street West, Hamilton, Ontario, Canada L8S 4K1 }

\begin{abstract}
This paper continues the investigation begun in \texttt{arXiv:1906.05602} of
extending the $T1\,$theorem of David and Journ\'{e}, and optimal
cancellation conditions, to more general weight pairs. The main additional
tool developed here is a two weight restricted weak type inequality, which
eliminates the $BICT$ from \texttt{arXiv:1906.05602} under certain
retrictions.

Assume $\sigma $ and $\omega $ are locally finite positive Borel measures on 
$\mathbb{R}^{n}$, with one of them an $A_{\infty }$ weight, and suppose that 
$T^{\alpha }$ is an $\alpha $-fractional Calder\'{o}n-Zygmund singular
integral in $\mathbb{R}^{n}$, $0\leq \alpha <n$. In the case $\alpha =0$ we
also assume that $T^{0}$ is bounded on unweighted $L^{2}\left( \mathbb{R}%
^{n}\right) $. Then the two weight restricted weak type inequality for $%
T^{\alpha }$,%
\begin{eqnarray*}
&&\left\vert \int_{F}T^{\alpha }\left( \mathbf{1}_{E}\sigma \right) d\omega
\right\vert \leq C\sqrt{\left\vert E\right\vert _{\sigma }\left\vert
F\right\vert _{\omega }}, \\
&&\text{for all compact subsets }E,F\text{ of a cube }Q,
\end{eqnarray*}%
holds \emph{if} \emph{and }(provided $T^{\alpha }$ is elliptic) \emph{only if%
} the classical fractional Muckenhoupt condition $A_{2}^{\alpha }$ holds,
i.e.%
\begin{equation*}
\left\vert Q\right\vert _{\sigma }\left\vert Q\right\vert _{\omega }\leq
C^{\prime }A_{2}^{\alpha }\left\vert Q\right\vert ^{2-\frac{2\alpha }{n}},\
\ \ \ \text{\ for all cubes }Q.
\end{equation*}

Applications are then given to a $Tp$ theorem for $\alpha $-fractional $CZO$%
's $T^{\alpha }$ and doubling measures when one of the weights is $A_{\infty
}$ (and $T^{0}$ is bounded on unweighted $L^{2}\left( \mathbb{R}^{n}\right) $
when $\alpha =0$), and then to optimal cancellation conditions for such $CZO$%
's in terms of polynomial testing in similar situations.
\end{abstract}

\dedicatory{In memory of Professor Elias M. Stein.}
\email{sawyer@mcmaster.ca}
\maketitle
\tableofcontents

\section{The restricted weak type theorem}

Let $\sigma $ and $\omega $ be locally finite positive Borel measures on $%
\mathbb{R}^{n}$. For $0\leq \alpha <n$, the classical $\alpha $-fractional
Muckenhoupt condition for the measure pair $\left( \sigma ,\omega \right) $
is given by%
\begin{equation}
A_{2}^{\alpha }\left( \sigma ,\omega \right) \equiv \sup_{Q\in \mathcal{P}%
^{n}}\frac{\left\vert Q\right\vert _{\sigma }}{\left\vert Q\right\vert ^{1-%
\frac{\alpha }{n}}}\frac{\left\vert Q\right\vert _{\omega }}{\left\vert
Q\right\vert ^{1-\frac{\alpha }{n}}}<\infty ,  \label{frac Muck}
\end{equation}%
and the stronger one-tailed $\alpha $-fractional Muckenhoupt condition is
given by%
\begin{equation}
\mathcal{A}_{2}^{\alpha }\left( \sigma ,\omega \right) \equiv \sup_{Q\in 
\mathcal{Q}^{n}}\mathcal{P}^{\alpha }\left( Q,\sigma \right) \frac{%
\left\vert Q\right\vert _{\omega }}{\left\vert Q\right\vert ^{1-\frac{\alpha 
}{n}}}<\infty \text{ and }\mathcal{A}_{2}^{\alpha ,\ast }\left( \sigma
,\omega \right) \equiv \mathcal{A}_{2}^{\alpha }\left( \omega ,\sigma
\right) <\infty ,  \label{one-tailed}
\end{equation}%
where $\mathcal{P}^{\alpha }\left( Q,\sigma \right) $ is the reproducing
Poisson integral

\begin{equation*}
\mathcal{P}^{\alpha }\left( Q,\mu \right) \equiv \int_{\mathbb{R}^{n}}\left( 
\frac{\left\vert Q\right\vert ^{\frac{1}{n}}}{\left( \left\vert Q\right\vert
^{\frac{1}{n}}+\left\vert x-x_{Q}\right\vert \right) ^{2}}\right) ^{n-\alpha
}d\mu \left( x\right) .
\end{equation*}

The measure $\sigma $ is said to be doubling if there is a pair of constants 
$\left( \beta ,\gamma \right) \in \left( 0,1\right) ^{2}$, called doubling
parameters, such that%
\begin{equation}
\left\vert \beta Q\right\vert _{\mu }\geq \gamma \left\vert Q\right\vert
_{\mu }\ ,\ \ \ \ \ \text{for all cubes }Q\in \mathcal{P}^{n}.
\label{def rev doub}
\end{equation}%
A familiar equivalent reformulation of (\ref{def rev doub}) is that there is
a positive constant $C_{\limfunc{doub}}$, called the doubling constant, such
that $\left\vert 2Q\right\vert _{\mu }\leq C_{\limfunc{doub}}\left\vert
Q\right\vert _{\mu }$ for all cubes $Q\in \mathcal{P}^{n}$. The absolutely
continuous measure $d\omega \left( x\right) =w\left( x\right) dx$ is said to
be an $A_{\infty }$ weight if there are constants $0<\varepsilon ,\eta <1$,
called $A_{\infty }$ parameters, such that%
\begin{equation*}
\frac{\left\vert E\right\vert _{\omega }}{\left\vert Q\right\vert _{\omega }}%
<\eta \text{ whenever }E\text{ compact }\subset Q\text{ a cube with }\frac{%
\left\vert E\right\vert }{\left\vert Q\right\vert }<\varepsilon .
\end{equation*}%
A useful reformulation given in \cite[Theorem III on page 244]{CoFe} is that
there is $C>0$ and an $A_{\infty }$ exponent $\varepsilon >0$ such that%
\begin{equation}
\frac{\left\vert E\right\vert _{\omega }}{\left\vert Q\right\vert _{\omega }}%
\leq C\left( \frac{\left\vert E\right\vert }{\left\vert Q\right\vert }%
\right) ^{\varepsilon }\text{ whenever }E\text{ compact }\subset Q\text{ a
cube}.  \label{reform}
\end{equation}

Let $0\leq \alpha <n$. We define a standard $\alpha $-fractional CZ kernel $%
K^{\alpha }(x,y)$ to be a function $K^{\alpha }:\mathbb{R}^{n}\times \mathbb{%
R}^{n}\rightarrow \mathbb{R}$ satisfying the following fractional size and\
minimal smoothness conditions: There is $\delta >0$ and $C_{CZ}>0$ such that
for $x\neq y$, 
\begin{eqnarray}
\left\vert K^{\alpha }\left( x,y\right) \right\vert &\leq &C_{CZ}\left\vert
x-y\right\vert ^{\alpha -n}  \label{sizeandsmoothness'} \\
\left\vert K^{\alpha }\left( x,y\right) -K^{\alpha }\left( x^{\prime
},y\right) \right\vert &\leq &C_{CZ}\left( \frac{\left\vert x-x^{\prime
}\right\vert }{\left\vert x-y\right\vert }\right) ^{\delta }\left\vert
x-y\right\vert ^{\alpha -n},\ \ \ \ \ \frac{\left\vert x-x^{\prime
}\right\vert }{\left\vert x-y\right\vert }\leq \frac{1}{2},  \notag
\end{eqnarray}%
and where the same inequalities hold for the adjoint kernel $K^{\alpha ,\ast
}\left( x,y\right) \equiv K^{\alpha }\left( y,x\right) $, in which $x$ and $%
y $ are interchanged. We also consider vector kernels $K^{\alpha }=\left(
K_{j}^{\alpha }\right) $ where each $K_{j}^{\alpha }$ is as above, often
without explicit mention. This includes for example the vector Riesz
transform in higher dimensions. Given a standard $\alpha $-fractional $CZ$
kernel $K^{\alpha }$, we consider truncated kernels $K_{\delta ,R}^{\alpha
}\left( x,y\right) =\eta _{\delta ,R}^{\alpha }\left( \left\vert
x-y\right\vert \right) K^{\alpha }\left( x,y\right) $ which uniformly
satisfy (\ref{sizeandsmoothness'}). Then the truncated operator $T_{\delta
,R}^{\alpha }$ with kernel $K_{\delta ,R}^{\alpha }$ is pointwise
well-defined, and we will refer to the pair $T^{\alpha }=\left( K^{\alpha
},\left\{ \eta _{\delta ,R}^{\alpha }\right\} _{0<\delta <R<\infty }\right) $
as an $\alpha $-fractional singular integral operator.

Let $T_{\sigma }^{\alpha }f=T^{\alpha }\left( f\sigma \right) $. We say that
an $\alpha $-fractional singular integral operator $T^{\alpha }$ satisfies
the restricted weak type inequality relative to the measure pair $\left(
\sigma ,\omega \right) $ provided%
\begin{eqnarray}
&&\mathfrak{N}_{T^{\alpha }}^{\limfunc{restricted}\limfunc{weak}}\left(
\sigma ,\omega \right) \equiv \sup_{Q\in \mathcal{P}^{n}}\sup_{E,F\subset Q}%
\frac{1}{\sqrt{\left\vert E\right\vert _{\sigma }\left\vert F\right\vert
_{\omega }}}\left\vert \int_{F}T_{\sigma }^{\alpha }\left( \mathbf{1}%
_{E}\right) \omega \right\vert <\infty ,  \label{RWT} \\
&&\text{where the second sup is taken over all compact subsets }E,F\ \text{%
of the cube }Q,  \notag \\
&&\text{and where }0<\delta <R<\infty .  \notag
\end{eqnarray}%
In the presence of the classical Muckenhoupt condition $A_{2}^{\alpha }$,
the restricted weak type inequality in (\ref{RWT}) is essentially
independent of the choice of truncations used - see \cite{LaSaShUr3}.
Finally, as in \cite{SaShUr7}, an $\alpha $-fractional vector Calder\'{o}%
n-Zygmund kernel $K^{\alpha }=\left( K_{j}^{\alpha }\right) $ is said to be
elliptic if there is $c>0$ such that for each unit vector $\mathbf{u}\in 
\mathbb{R}^{n}$ there is $j$ satisfying%
\begin{equation*}
\left\vert K_{j}^{\alpha }\left( x,x+tu\right) \right\vert \geq ct^{\alpha
-n},\ \ \ \ \ \text{for all }t>0.
\end{equation*}

\begin{remark}
In the special case $\alpha =0$, we will make the additional assumption
throughout this paper that $T^{0}$ is bounded on \emph{unweighted} $%
L^{2}\left( \mathbb{R}^{n}\right) $. This is done in order to be able to use
the weak type $\left( 1,1\right) $ result on Lebesgue measure for maximal
truncations of such operators, that follows from standard Calder\'{o}%
n-Zygmund theory as in \cite[Corollary 2 on page 36]{Ste2}. As a
consequence, our results say nothing new in the case of \emph{equal} weights
when $\alpha =0$. Of course, by the $T1$ theorem in \cite{DaJo}, the
additional assumption of boundedness on unweighted $L^{2}\left( \mathbb{R}%
^{n}\right) $ is equivalent to the classical cube testing conditions on
Lebesgue measure.
\end{remark}

\begin{theorem}
\label{restricted weak type}Suppose that $\sigma $ and $\omega $ are locally
finite positive Borel measures on $\mathbb{R}^{n}$, with one of them an $%
A_{\infty }$ weight. Let $0\leq \alpha <n$. Suppose also that $T^{\alpha }$
is a standard $\alpha $-fractional Calder\'{o}n-Zygmund singular integral in 
$\mathbb{R}^{n}$, and that when $\alpha =0$ the operator $T^{0}$ is bounded
on unweighted $L^{2}\left( \mathbb{R}^{n}\right) $. Then the two weight
restricted weak type inequality for $T^{\alpha }$ relative to the measure
pair $\left( \sigma ,\omega \right) $ holds \emph{if }the classical
fractional Muckenhoupt constant $A_{2}^{\alpha }$ in (\ref{frac Muck}) is
finite and, provided $T^{\alpha }$ is elliptic, \emph{only if} $%
A_{2}^{\alpha }$ is finite. Moreover, in the case $T^{\alpha }$ is elliptic, 
\begin{equation*}
\mathfrak{N}_{T^{\alpha }}^{\limfunc{restricted}\limfunc{weak}}\left( \sigma
,\omega \right) \approx A_{2}^{\alpha }\left( \sigma ,\omega \right) ,
\end{equation*}%
where the implied constants depend on the Calderon-Zygmund norm $C_{CZ}$ in (%
\ref{sizeandsmoothness'}) and the $A_{\infty }$ norm of one of the weights.
\end{theorem}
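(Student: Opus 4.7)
When $T^\alpha$ is elliptic, the standard argument applies: for a cube $Q$, pick two disjoint congruent subsets $E,F\subset Q$ placed so that all vectors from $F$ to $E$ lie in a narrow cone about a single unit direction. By the ellipticity hypothesis, some component $K_j^\alpha$ of the kernel satisfies $|K_j^\alpha(x,y)|\gtrsim |Q|^{(\alpha-n)/n}$ on this cone, and a further pigeonholing yields a fixed sign, so that
$$\left|\int_F T^\alpha_\sigma(\mathbf{1}_E)\,d\omega\right|\gtrsim |Q|^{(\alpha-n)/n}|E|_\sigma|F|_\omega.$$
Comparing with the hypothesized restricted weak type bound (\ref{RWT}) and taking $E,F$ with $\sigma$-, resp.\ $\omega$-mass comparable to $|Q|_\sigma$, resp.\ $|Q|_\omega$, yields $|Q|_\sigma|Q|_\omega\lesssim \mathfrak{N}^2|Q|^{2-2\alpha/n}$, which is (\ref{frac Muck}).

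\textbf{Sufficiency.} Assume without loss of generality that $\omega$ is the $A_\infty$ weight, with reverse-H\"older exponent $\varepsilon>0$ as in (\ref{reform}). Fix $Q$ and compact $E,F\subset Q$; the goal is
$$\left|\int_F T^\alpha_\sigma(\mathbf{1}_E)\,d\omega\right|\lesssim \sqrt{A_2^\alpha}\sqrt{|E|_\sigma|F|_\omega}.$$
My plan is to split the bilinear form into a \emph{far} piece, where one integrates over $(x,y)\in F\times E$ with $|x-y|\gtrsim |Q|^{1/n}$, and a \emph{near} piece with $|x-y|\ll |Q|^{1/n}$. For the far piece the size bound in (\ref{sizeandsmoothness'}) gives $|K^\alpha(x,y)|\lesssim |Q|^{(\alpha-n)/n}$, so this contribution is at most a multiple of $|Q|^{(\alpha-n)/n}|E|_\sigma|F|_\omega$; writing this as $\sqrt{|E|_\sigma|F|_\omega}\cdot|Q|^{(\alpha-n)/n}\sqrt{|E|_\sigma|F|_\omega}$ and using $|E|_\sigma|F|_\omega\le|Q|_\sigma|Q|_\omega\le A_2^\alpha|Q|^{2-2\alpha/n}$ yields exactly $\sqrt{A_2^\alpha}\sqrt{|E|_\sigma|F|_\omega}$.

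\textbf{Near piece and main obstacle.} For the near piece I would perform a Calder\'on--Zygmund decomposition of $\mathbf{1}_E\sigma$ at an appropriately chosen level and, on each resulting bad cube $Q_j$, invoke the \emph{Lebesgue} weak-type $(1,1)$ inequality for the maximal truncation $T^{\alpha,*}$. This is classical when $\alpha>0$, and when $\alpha=0$ is precisely why the paper assumes $T^0$ is bounded on unweighted $L^2(\mathbb{R}^n)$ (via \cite[Corollary 2 on page 36]{Ste2}). The $A_\infty$ reformulation (\ref{reform}) then converts the Lebesgue estimate on the level set $\{x\in Q:|T^\alpha_\sigma(\mathbf{1}_E)(x)|>t\}$ into an $\omega$-measure estimate, at the price of a power $\varepsilon$. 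Integrating the distribution function on $F$ and summing over the stopping cubes, with $A_2^\alpha$ used to shuttle between Lebesgue and weighted measures, should produce $\sqrt{|E|_\sigma|F|_\omega}$. The main obstacle is the optimization of the threshold $t$: the reverse-H\"older exponent $\varepsilon$ and the fractional gain from $A_2^\alpha$ must be balanced so that the Muckenhoupt and density powers both end at exactly $1/2$, symmetrically in $E$ and $F$ --- delicate because only one of the two weights is known to be $A_\infty$, and the resulting asymmetry must be absorbed by the stopping-time geometry rather than by the kernel.
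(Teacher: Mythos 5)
Your necessity argument is the standard cone/ellipticity one, which matches what the paper delegates to \cite{LiTr} and \cite{SaShUr7}; no issue there. Your far-piece estimate in the sufficiency direction is correct, but it is the trivial part.

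The genuine gap is the near piece, and you essentially flag it yourself. The difficulty is structural: a single Calder\'{o}n--Zygmund decomposition of $\mathbf{1}_{E}\sigma$ at one threshold $t$, followed by one application of Lebesgue weak-$(1,1)$ and one application of (\ref{reform}), cannot produce the target bound, because the level set $\{|T_{\sigma }^{\alpha }(\mathbf{1}_{E})|>\lambda\}$ must be controlled at \emph{all} scales $\lambda$ simultaneously, and the conversion from Lebesgue to $\omega$-measure via $A_{\infty}$ has to be performed locally on a Whitney decomposition of each superlevel set, not once globally on $Q$. What you are implicitly trying to reconstruct is exactly the $\limfunc{good}$-$\lambda $ inequality of Coifman--Fefferman \cite{CoFe} (for $\alpha=0$) and its fractional extension by Muckenhoupt--Wheeden \cite{MuWh} (for $0<\alpha<n$). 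The paper's proof does not rebuild this machinery: it quotes (\ref{CF good}) and (\ref{CF good alpha}) directly, which compare $T_{\flat }^{\alpha }$ (resp.\ $I^{\alpha }$) against $M^{\alpha }$ in $\omega$-measure, and then runs a short absorption argument to pass from the $\limfunc{good}$-$\lambda $ inequality to
\begin{equation*}
\left\Vert T_{\flat }^{\alpha }\left( f\sigma \right) \right\Vert _{L^{2,\infty }\left( \omega \right) }\lesssim \left\Vert M^{\alpha }\left( f\sigma \right) \right\Vert _{L^{2,\infty }\left( \omega \right) },
\end{equation*}
after which $\mathfrak{N}_{M^{\alpha }}^{\limfunc{weak}}\left( \sigma ,\omega \right) \approx A_{2}^{\alpha }\left( \sigma ,\omega \right) $ finishes. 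Taking $f=\mathbf{1}_{E}$ and testing on $F$ then gives the restricted weak type bound with $\sqrt{|E|_{\sigma }|F|_{\omega }}$ on the right, so no far/near split of the bilinear form is needed at all.

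A second omission worth noting: the absorption step that turns (\ref{CF good}) into the $L^{2,\infty }(\omega )$ comparison requires an a priori finiteness of $\sup_{0<\lambda\leq t}\lambda ^{2}\left\vert \left\{ T_{\flat }^{\alpha }\left( f\sigma \right) >\lambda \right\} \right\vert _{\omega }$ for bounded compactly supported $f$. The paper spends a full page verifying this using the decay of the kernel outside $B(0,2r)$ together with $A_{2}^{\alpha }$; your sketch does not address it, and it is not automatic, since $\omega$ need not be finite. So: your plan for the near piece would, if carried out, amount to reproving Coifman--Fefferman, and even then the absorption/finiteness issue remains. The efficient route is the one the paper takes, invoking \cite{CoFe} and \cite{MuWh} as black boxes.
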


\begin{remark}
The proof of the theorem shows a bit more, namely that the restricted weak
type norms of $T^{\alpha }$ and its maximal trunction operator $T_{\flat
}^{\alpha }$ (see below) are equivalent, and including the fractional
integral $I^{\alpha }$ (see below) when $0<\alpha <n$.
\end{remark}

\subsection{Proof of the restricted weak type theorem}

The proof of the theorem is a standard application of an idea from four and
a half decades ago, namely the 1973 $\limfunc{good}-\lambda $ inequality of
Burkholder\ \cite{Bur}, and specifically the 1974 inequality of R. Coifman
and C. Fefferman \cite{CoFe}. This\ latter inequality relates maximal
truncations of a CZ singular integral to the maximal operator $M$, which we
now briefly recall.

Given an $\alpha $-fractional CZ operator $T^{\alpha }$, define the maximal
truncation operator $T_{\flat }^{\alpha }$ by%
\begin{equation*}
T_{\flat }^{\alpha }\left( f\sigma \right) \left( x\right) \equiv
\sup_{0<\varepsilon <R<\infty }\left\vert \int_{\left\{ \varepsilon
<\left\vert y\right\vert <R\right\} }K^{\alpha }\left( x,y\right) f\left(
y\right) d\sigma \left( y\right) \right\vert ,\ \ \ \ \ x\in \mathbb{R}^{n},
\end{equation*}%
for any locally finite positive Borel measure $\sigma $ on $\mathbb{R}^{n}$,
and $f\in L^{2}\left( \sigma \right) $. Define the $\alpha $-fractional
Hardy-Littlewood maximal operator $M^{\alpha }$ by%
\begin{equation*}
M^{\alpha }\left( f\sigma \right) \left( x\right) \equiv \sup_{Q\in \mathcal{%
P}^{n}:\ x\in Q}\frac{1}{\left\vert Q\right\vert ^{1-\frac{\alpha }{n}}}%
\int_{Q}\left\vert f\right\vert d\sigma ,\ \ \ \ \ x\in \mathbb{R}^{n},
\end{equation*}%
where here we may take the cubes $Q$ in the supremum to be closed.

Let $\omega $ be an $A_{\infty }$ weight. Suppose first that $\alpha =0$.
Then the Coifman-Fefferman $\limfunc{good}-\lambda $ inequality in \cite[see
inequality (7) on page 245]{CoFe} is%
\begin{equation}
\left\vert \left\{ x\in Q:\ T_{\flat }\left( f\sigma \right) \left( x\right)
>2\lambda \text{ and }M\left( f\sigma \right) \left( x\right) \leq \beta
\lambda \right\} \right\vert _{\omega }\leq C\beta ^{\varepsilon }\left\vert
\left\{ x\in Q:\ T_{\flat }\left( f\sigma \right) \left( x\right) >\lambda
\right\} \right\vert _{\omega },  \label{CF good}
\end{equation}%
for all $\lambda >0$, where $\varepsilon >0$ is the $A_{\infty }$ exponent
in (\ref{reform}). The kernels considered in \cite{CoFe} are convolution
kernels with order $1$ smoothness and bounded Fourier transform. However,
since we are assuming here that $T$ is bounded on unweighted $L^{2}\left( 
\mathbb{R}^{n}\right) $, standard CZ theory \cite[Corollary 2 on page 36]%
{Ste2} implies that $T_{\flat }$ is weak type $\left( 1,1\right) $ on
Lebesgue measure. This estimate is the key to the proof in \cite[see pages
245-246 where the the weak type $\left( 1,1\right) $ inequality for $%
T_{\flat }$ is used]{CoFe}, and this proof shows that the kernel of the
operator $T$ may be taken to be a standard kernel in the above sense.

In the case $0<\alpha <n$, this $\limfunc{good}-\lambda $ inequality for an $%
A_{\infty }$ weight $\omega $ was extended in \cite{MuWh} (by essentially
the same proof) when $T_{\flat }$ and $M$ are replaced by $I^{\alpha }$ and $%
M^{\alpha }$ respectively:%
\begin{equation}
\left\vert \left\{ x\in Q:\ I^{\alpha }\left( f\sigma \right) \left(
x\right) >2\lambda \text{ and }M^{\alpha }\left( f\sigma \right) \left(
x\right) \leq \beta \lambda \right\} \right\vert _{\omega }\leq \frac{C}{%
\beta }\left\vert \left\{ x\in Q:\ I^{\alpha }\left( f\sigma \right) \left(
x\right) >\lambda \right\} \right\vert _{\omega },  \label{CF good alpha}
\end{equation}%
for all $\lambda >0$. Here the fractional integral $I^{\alpha }$ is given by 
$I^{\alpha }\nu \left( x\right) \equiv \int_{\mathbb{R}^{n}}\left\vert
x-y\right\vert ^{\alpha -n}d\nu \left( y\right) $, and we will use below the
obvious fact that $\left\vert T_{\flat }^{\alpha }\nu \left( x\right)
\right\vert \leq CI^{\alpha }\nu \left( x\right) $ for $d\nu \geq 0$. ($%
I_{\alpha }$ is a positive operator satisfying the weak type $\left( 1,\frac{%
n}{n-\alpha }\right) $\ inequality on Lebesgue measure, and this is why
there is no need to assume any additional unweighted boundedness of $%
T^{\alpha }$ when $\alpha >0$).

From such $\limfunc{good}-\lambda $ inequalities for $A_{\infty }$ weights $%
\omega $, standard arguments show that $\left\Vert T_{\flat }^{\alpha
}\left( f\sigma \right) \right\Vert _{L^{2}\left( \omega \right) }\lesssim $ 
$\left\Vert M^{\alpha }\left( f\sigma \right) \right\Vert _{L^{2}\left(
\omega \right) }$ for $0\leq \alpha <n$ and $f\in L^{2}\left( \sigma \right) 
$. We will use a weak type variant of this latter inequality, together with
the equivalence of $\mathfrak{N}_{M^{\alpha }}^{\limfunc{weak}}\left( \sigma
,\omega \right) $ and $A_{2}^{\alpha }\left( \sigma ,\omega \right) $, to
prove the theorem.

\begin{proof}[Proof of Theorem \protect\ref{restricted weak type}]
Since the restricted weak type inequality is self-dual, we can assume
without loss of generality that $\omega $ is an $A_{\infty }$ weight. We
begin by showing that the $\limfunc{good}-\lambda $ inequalities for $%
A_{\infty }$ weights $\omega $ imply \emph{weak type} control, exercising
care in absorbing terms. Indeed, for $t>0$, we obtain from (\ref{CF good})
and (\ref{CF good alpha}) that%
\begin{eqnarray*}
\sup_{0<\lambda \leq t}\lambda ^{2}\left\vert \left\{ T_{\flat }^{\alpha
}\left( f\sigma \right) >\lambda \right\} \right\vert _{\omega }
&=&4\sup_{0<\lambda \leq \frac{t}{2}}\lambda ^{2}\left\vert \left\{ T_{\flat
}^{\alpha }\left( f\sigma \right) >2\lambda \right\} \right\vert _{\omega }
\\
&\leq &4\sup_{0<\lambda \leq \frac{t}{2}}\lambda ^{2}\left\{ \left\vert
\left\{ M^{\alpha }\left( f\sigma \right) >\beta \lambda \right\}
\right\vert _{\omega }+\frac{C}{\beta }\left\vert \left\{ T_{\flat }^{\alpha
}\left( f\sigma \right) >\lambda \right\} \right\vert _{\omega }\right\} \\
&=&\frac{4}{\beta ^{2}}\sup_{0<\lambda \leq \frac{\beta }{2}t}\lambda
^{2}\left\vert \left\{ M^{\alpha }\left( f\sigma \right) >\beta \lambda
\right\} \right\vert _{\omega }+4\sup_{0<\lambda \leq \frac{t}{2}}\frac{C}{%
\beta }\left\vert \left\{ T_{\flat }^{\alpha }\left( f\sigma \right)
>\lambda \right\} \right\vert _{\omega } \\
&\leq &\frac{4}{\beta ^{2}}\left\Vert M^{\alpha }\left( f\sigma \right)
\right\Vert _{L^{2,\infty }\left( \omega \right) }^{2}+\frac{4C}{\beta }%
\sup_{0<\lambda \leq t}\lambda ^{2}\left\vert \left\{ T_{\flat }^{\alpha
}\left( f\sigma \right) >\lambda \right\} \right\vert _{\omega }\ .
\end{eqnarray*}%
Now choose $\beta $ so that $\frac{4C}{\beta }=\frac{1}{2}$. Provided that $%
\sup_{0<\lambda \leq t}\lambda ^{2}\left\vert \left\{ T_{\flat }^{\alpha
}\left( f\sigma \right) >\lambda \right\} \right\vert _{\omega }$ is finite
for each $t>0$, we can absorb the final term on the right into the left hand
side to obtain%
\begin{equation*}
\sup_{0<\lambda \leq t}\lambda ^{2}\left\vert \left\{ T_{\flat }^{\alpha
}\left( f\sigma \right) >\lambda \right\} \right\vert _{\omega }\leq \frac{8%
}{\beta ^{2}}\left\Vert M^{\alpha }\nu \right\Vert _{L^{2,\infty }\left(
\omega \right) }^{2},\ \ \ \ \ t>0,
\end{equation*}%
which gives%
\begin{equation*}
\left\Vert T_{\flat }^{\alpha }\left( f\sigma \right) \right\Vert
_{L^{2,\infty }\left( \omega \right) }^{2}=\sup_{0<\lambda <\infty }\lambda
^{2}\left\vert \left\{ T_{\flat }^{\alpha }\left( f\sigma \right) >\lambda
\right\} \right\vert _{\omega }\leq \frac{8}{\beta ^{2}}\left\Vert M^{\alpha
}\left( f\sigma \right) \right\Vert _{L^{2,\infty }\left( \omega \right)
}^{2}\ .
\end{equation*}

Suppose now that $\alpha =0$. In order to obtain finiteness of the supremum
over $0<\lambda \leq t$, we take $f\in L^{2}\left( \sigma \right) $ wtih $%
\left\vert f\right\vert \leq 1$ and $\limfunc{supp}f\subset B\left(
0,r\right) $ with $1\leq r<\infty $ and $\left\vert B\left( 0,r\right)
\right\vert _{\sigma }>0$. Then if $x\notin B\left( 0,2r\right) $, we have $%
\left\vert K\left( x,y\right) \right\vert \leq C_{CZ}r^{-n}$ and hence%
\begin{equation*}
T_{\flat }\left( f\sigma \right) \left( x\right) =\sup_{0<\varepsilon
<R<\infty }\left\vert \int_{\left\{ \varepsilon <\left\vert y\right\vert
<R\right\} \cap B\left( 0,r\right) }K\left( x,y\right) f\left( y\right)
d\sigma \left( y\right) \right\vert \leq C_{CZ}\left( \frac{2}{\left\vert
x\right\vert }\right) ^{n}\left\vert B\left( 0,r\right) \right\vert _{\sigma
}\ .
\end{equation*}%
This shows that 
\begin{eqnarray*}
&&\sup_{0<\lambda \leq t}\lambda ^{2}\left\vert \left\{ T_{\flat }\nu
>\lambda \right\} \right\vert _{\omega }\leq t^{2}\left\vert B\left(
0,2r\right) \right\vert _{\omega }+\sup_{0<\lambda <C_{CZ}r^{-n}\left\vert
B\left( 0,r\right) \right\vert _{\sigma }}\lambda ^{2}\left\vert \left\{
T_{\flat }\nu >\lambda \right\} \setminus B\left( 0,2r\right) \right\vert
_{\omega } \\
&\leq &t^{2}\left\vert B\left( 0,2r\right) \right\vert _{\omega
}+\sup_{0<\lambda \leq t}\lambda ^{2}\left\vert \left\{ C_{CZ}\left( \frac{2%
}{\left\vert x\right\vert }\right) ^{n}\left\vert B\left( 0,r\right)
\right\vert >\lambda \right\} \right\vert _{\omega } \\
&=&t^{2}\left\vert B\left( 0,2r\right) \right\vert _{\omega
}+\sup_{0<\lambda \leq t}\lambda ^{2}\left\vert B\left( 0,\gamma _{\lambda
}r\right) \right\vert _{\omega }\ ,
\end{eqnarray*}%
with $\gamma _{\lambda }\equiv 2\sqrt[n]{\frac{C_{CZ}c}{\lambda }}$, since $%
\left\{ C_{CZ}\left( \frac{2}{\left\vert x\right\vert }\right)
^{n}\left\vert B\left( 0,r\right) \right\vert >\lambda \right\} =B\left( 0,2%
\sqrt[n]{\frac{C_{CZ}c}{\lambda }}r\right) $.

On the other hand, the $A_{2}$ condition implies that for $\lambda \leq
\lambda _{0}\equiv C_{CZ}c$, we have $\gamma _{\lambda }\geq \gamma
_{\lambda _{0}}=2$ so that 
\begin{equation*}
\left\vert B\left( 0,\gamma _{\lambda }r\right) \right\vert _{\omega
}\lesssim A_{2}\left( \sigma ,\omega \right) \frac{\left\vert B\left(
0,\gamma _{\lambda }r\right) \right\vert ^{2}}{\left\vert B\left( 0,\gamma
_{\lambda }r\right) \right\vert _{\sigma }}\leq A_{2}\left( \sigma ,\omega
\right) \frac{\left( \gamma _{\lambda }r\right) ^{2n}}{\left\vert B\left(
0,2r\right) \right\vert _{\sigma }}A_{2}\left( \sigma ,\omega \right) =\frac{%
4^{n}\left( \frac{C_{CZ}c}{\lambda }\right) ^{2}}{\left\vert B\left(
0,2r\right) \right\vert _{\sigma }},
\end{equation*}%
and hence%
\begin{equation*}
\lambda ^{2}\left\vert B\left( 0,\gamma _{\lambda }r\right) \right\vert
_{\omega }\leq \lambda ^{2}\frac{4^{n}\left( \frac{C_{CZ}c}{\lambda }\right)
^{2}}{\left\vert B\left( 0,2r\right) \right\vert _{\sigma }}=\frac{%
4^{n}C_{CZ}c^{2}}{\left\vert B\left( 0,2r\right) \right\vert _{\sigma }},\ \
\ \ \ \text{for }\lambda \leq \lambda _{0}\ .
\end{equation*}

Finally we have%
\begin{equation*}
\sup_{\lambda _{0}<\lambda \leq t}\lambda ^{2}\left\vert B\left( 0,\gamma
_{\lambda }r\right) \right\vert _{\omega }\leq t^{2}\left\vert B\left(
0,\gamma _{\lambda _{0}}r\right) \right\vert _{\omega }=t^{2}\left\vert
B\left( 0,2r\right) \right\vert _{\omega }\ ,
\end{equation*}%
and altogether then%
\begin{equation*}
\sup_{0<\lambda \leq t}\lambda ^{2}\left\vert \left\{ T_{\flat }\nu >\lambda
\right\} \right\vert _{\omega }\leq t^{2}\left\vert B\left( 0,2r\right)
\right\vert _{\omega }+\frac{4^{n}C_{CZ}c^{2}}{\left\vert B\left(
0,2r\right) \right\vert _{\sigma }}+t^{2}\left\vert B\left( 0,2r\right)
\right\vert _{\omega }
\end{equation*}%
which is finite for $0<t<\infty $. Thus we conclude that 
\begin{equation*}
\mathfrak{N}_{T}^{\limfunc{restricted}\limfunc{weak}}\left( \sigma ,\omega
\right) \leq \mathfrak{N}_{T}^{\limfunc{weak}}\left( \sigma ,\omega \right)
\leq \mathfrak{N}_{T_{\flat }}^{\limfunc{weak}}\left( \sigma ,\omega \right)
\lesssim \mathfrak{N}_{M}^{\limfunc{weak}}\left( \sigma ,\omega \right)
\approx A_{2}\left( \sigma ,\omega \right) ,
\end{equation*}%
where the final equivalence is well known, and can be obtained by averaging
over dyadic grids $\mathcal{D}$ the inequality $\mathfrak{N}_{M_{\mathcal{D}%
}^{\alpha }}^{\limfunc{weak}}\left( \sigma ,\omega \right) \lesssim
A_{2}\left( \sigma ,\omega \right) $ for dyadic operators 
\begin{equation*}
M_{\mathcal{D}}^{\alpha }f\left( x\right) \equiv \sup_{Q\in \mathcal{P}%
^{n}:\ x\in Q}\frac{1}{\left\vert Q\right\vert ^{1-\frac{\alpha }{n}}}%
\int_{Q}\left\vert f\right\vert d\sigma .
\end{equation*}%
The dyadic inequality is in turn an immediate consequence of the dyadic
covering lemma. Conversely, if $T^{\alpha }$ is elliptic, then $A_{2}\left(
\sigma ,\omega \right) \lesssim \mathfrak{N}_{T}^{\limfunc{restricted}%
\limfunc{weak}}\left( \sigma ,\omega \right) $ (see \cite{LiTr} and \cite%
{SaShUr7}).

The same sort of arguments give the analogous inequality when $0<\alpha <n$: 
\begin{equation*}
\mathfrak{N}_{T^{\alpha }}^{\limfunc{restricted}\limfunc{weak}}\left( \sigma
,\omega \right) \leq \mathfrak{N}_{I^{\alpha }}^{\limfunc{restricted}%
\limfunc{weak}}\left( \sigma ,\omega \right) \leq \mathfrak{N}_{I^{\alpha
}}^{\limfunc{weak}}\left( \sigma ,\omega \right) \lesssim \mathfrak{N}%
_{M^{\alpha }}^{\limfunc{weak}}\left( \sigma ,\omega \right) \approx
A_{2}^{\alpha }\left( \sigma ,\omega \right) ,
\end{equation*}%
and conversely, $A_{2}^{\alpha }\left( \sigma ,\omega \right) \lesssim 
\mathfrak{N}_{T^{\alpha }}^{\limfunc{restricted}\limfunc{weak}}\left( \sigma
,\omega \right) $ if $T^{\alpha }$ is elliptic. This completes the proof of
Theorem \ref{restricted weak type}.
\end{proof}

\section{A $Tp$ theorem for doubling weights when one weight is $A_{\infty }$%
}

The \emph{Bilinear Indicator/Cube Testing} property introduced in \cite{Saw2}
is%
\begin{equation}
\mathcal{BICT}_{T^{\alpha }}\left( \sigma ,\omega \right) \equiv \sup_{Q\in 
\mathcal{P}^{n}}\sup_{E,F\subset Q}\frac{1}{\sqrt{\left\vert Q\right\vert
_{\sigma }\left\vert Q\right\vert _{\omega }}}\left\vert \int_{F}T_{\sigma
}^{\alpha }\left( \mathbf{1}_{E}\right) \omega \right\vert <\infty ,
\label{def ind WBP}
\end{equation}%
where the second supremum above is taken over all compact sets $E$ and $F$
contained in the cube $Q$. Theorem \ref{restricted weak type} shows that%
\begin{equation*}
\mathcal{BICT}_{T^{\alpha }}\left( \sigma ,\omega \right) \lesssim
A_{2}^{\alpha }\left( \sigma ,\omega \right) ,
\end{equation*}%
with the implied constant depending on $T^{\alpha }$ and the $A_{\infty }$
parameters of one of the weights. This latter inequality shows that we can
immediately remove $\mathcal{BICT}_{T^{\alpha }}\left( \sigma ,\omega
\right) $ from the right hand side of Theorem 1 in \cite{Saw2}, when in
addition one of the weights is $A_{\infty }$ (and $T^{0}$ is bounded on
unweighted $L^{2}\left( \mathbb{R}^{n}\right) $ if $\alpha =0$). We now
describe the resulting theorem.

First, for $0\leq \alpha <n$ and $\kappa _{1},\kappa _{2}\in \mathbb{N}$, we
say that $K^{\alpha }\left( x,y\right) $ is a standard $\left( \kappa
_{1}+\delta ,\kappa _{2}+\delta \right) $-smooth $\alpha $-fractional kernel
if for $x\neq y$, and with $\nabla _{1}$ denoting gradient in the first
variable, and $\nabla _{2}$ denoting gradient in the second variable, 
\begin{eqnarray*}
&&\left\vert \nabla _{1}^{j}K^{\alpha }\left( x,y\right) \right\vert \leq
C_{CZ}\left\vert x-y\right\vert ^{\alpha -j-n-1},\ \ \ \ \ 0\leq j\leq
\kappa _{1}, \\
&&\left\vert \nabla _{1}^{\kappa }K^{\alpha }\left( x,y\right) -\nabla
_{1}^{\kappa }K^{\alpha }\left( x^{\prime },y\right) \right\vert \leq
C_{CZ}\left( \frac{\left\vert x-x^{\prime }\right\vert }{\left\vert
x-y\right\vert }\right) ^{\delta }\left\vert x-y\right\vert ^{\alpha -\kappa
_{1}-n-1},\ \ \ \ \ \frac{\left\vert x-x^{\prime }\right\vert }{\left\vert
x-y\right\vert }\leq \frac{1}{2},
\end{eqnarray*}%
and where the same inequalities hold for the adjoint kernel $K^{\alpha ,\ast
}\left( x,y\right) \equiv K^{\alpha }\left( y,x\right) $, in which $x$ and $%
y $ are interchanged, and where $\kappa _{1}$ is replaced by $\kappa _{2}$,
and $\nabla _{1}$\ by $\nabla _{2}$.

The $\kappa $\emph{-cube testing conditions} associated with an $\alpha $%
-fractional singular integral operator $T^{\alpha }$ introduced by Rahm,
Sawyer and Wick in \cite{RaSaWi} are given, with a slight modification, by%
\begin{eqnarray}
\left( \mathfrak{T}_{T^{\alpha }}^{\left( \kappa \right) }\left( \sigma
,\omega \right) \right) ^{2} &\equiv &\sup_{Q\in \mathcal{P}^{n}}\max_{0\leq
\left\vert \beta \right\vert <\kappa }\frac{1}{\left\vert Q\right\vert
_{\sigma }}\int_{Q}\left\vert T_{\sigma }^{\alpha }\left( \mathbf{1}%
_{Q}m_{Q}^{\beta }\right) \right\vert ^{2}\omega <\infty ,
\label{def Kappa polynomial} \\
\left( \mathfrak{T}_{\left( T^{\alpha }\right) ^{\ast }}^{\left( \kappa
\right) }\left( \omega ,\sigma \right) \right) ^{2} &\equiv &\sup_{Q\in 
\mathcal{P}^{n}}\max_{0\leq \left\vert \beta \right\vert <\kappa }\frac{1}{%
\left\vert Q\right\vert _{\omega }}\int_{Q}\left\vert \left( T_{\sigma
}^{\alpha }\right) ^{\ast }\left( \mathbf{1}_{Q}m_{Q}^{\beta }\right)
\right\vert ^{2}\sigma <\infty ,  \notag
\end{eqnarray}%
with $m_{Q}^{\beta }\left( x\right) \equiv \left( \frac{x-c_{Q}}{\ell \left(
Q\right) }\right) ^{\beta }$ for any cube $Q$ and multiindex $\beta $, where 
$c_{Q}$ is the center of the cube $Q$, and where as usual we interpret the
right hand sides as holding uniformly over all sufficiently smooth
truncations of $T^{\alpha }$. The following theorem provides a $Tp$
extension of the $T1$ theorem of David and Journ\'{e} \cite{DaJo} to a pair
of weights with one doubling and the other $A_{\infty }$ (and provided the
operator is bounded on unweighted $L^{2}\left( \mathbb{R}^{n}\right) $ when $%
\alpha =0$).

\begin{theorem}
\label{A infinity theorem}Suppose $0\leq \alpha <n$, and $\kappa _{1},\kappa
_{2}\in \mathbb{N}$ and $0<\delta <1$. Let $T^{\alpha }$ be an $\alpha $%
-fractional Calder\'{o}n-Zygmund singular integral operator on $\mathbb{R}%
^{n}$ with a standard $\left( \kappa _{1}+\delta ,\kappa _{2}+\delta \right) 
$-smooth $\alpha $-fractional kernel $K^{\alpha }$, and when $\alpha =0$,
suppose that $T^{0}$ is bounded on unweighted $L^{2}\left( \mathbb{R}%
^{n}\right) $. Assume that $\sigma $ and $\omega $ are locally finite
positive Borel doubling measures on $\mathbb{R}^{n}$ with doubling exponents 
$\theta _{1}$ and $\theta _{2}$ respectively. Set 
\begin{equation*}
T_{\sigma }^{\alpha }f=T^{\alpha }\left( f\sigma \right) 
\end{equation*}%
for any smooth truncation of $T^{\alpha }$. Suppose that $\kappa _{1}>\theta
_{1}+\alpha -n$ and $\kappa _{2}>\theta _{2}+\alpha -n$, and finally that in
addition, one of the measures is an $A_{\infty }$ weight. Then the operator $%
T_{\sigma }^{\alpha }$ is bounded from $L^{2}\left( \sigma \right) $ to $%
L^{2}\left( \omega \right) $, i.e. 
\begin{equation}
\left\Vert T_{\sigma }^{\alpha }f\right\Vert _{L^{2}\left( \omega \right)
}\leq \mathfrak{N}_{T^{\alpha }}\left( \sigma ,\omega \right) \left\Vert
f\right\Vert _{L^{2}\left( \sigma \right) },  \label{strong type}
\end{equation}%
uniformly in smooth truncations of $T^{\alpha }$, provided that the
classical fractional conditions (\ref{one-tailed}) of Muckenhoupt hold, and
the two dual $\kappa $-Cube Testing conditions (\ref{def Kappa polynomial})
hold. Moreover we have%
\begin{equation}
\mathfrak{N}_{T^{\alpha }}\left( \sigma ,\omega \right) \leq C\left( \sqrt{%
\mathcal{A}_{2}^{\alpha }\left( \sigma ,\omega \right) +\mathcal{A}%
_{2}^{\alpha }\left( \omega ,\sigma \right) }+\mathfrak{T}_{T^{\alpha
}}^{\left( \kappa \right) }\left( \sigma ,\omega \right) +\mathfrak{T}%
_{\left( T^{\alpha }\right) ^{\ast }}^{\left( \kappa \right) }\left( \omega
,\sigma \right) \right) ,  \label{more}
\end{equation}%
where the constant $C$ depends on $C_{CZ}$ in (\ref{sizeandsmoothness'}) and
the doubling parameters $\left( \beta _{1},\gamma _{1}\right) ,\left( \beta
_{2},\gamma _{2}\right) $ of the weights $\sigma $ and $\omega $, as well as
on the $A_{\infty }$ parameters of one of the weights. If $T^{\alpha }$ is
elliptic the inequality can be reversed.
\end{theorem}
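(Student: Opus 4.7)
The plan is a direct reduction to the $Tp$ theorem of the companion paper \texttt{arXiv:1906.05602}, using Theorem \ref{restricted weak type} as the one new ingredient needed to eliminate the bilinear indicator/cube testing hypothesis which appeared there.

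\emph{Step 1.} I would invoke Theorem 1 of \cite{Saw2}, which under precisely the smoothness, doubling, one-tailed Muckenhoupt (\ref{one-tailed}) and $\kappa$-cube testing (\ref{def Kappa polynomial}) hypotheses of the present theorem, yields
\[
\mathfrak{N}_{T^\alpha}(\sigma,\omega)\leq C\left(\sqrt{\mathcal{A}_2^\alpha(\sigma,\omega)+\mathcal{A}_2^\alpha(\omega,\sigma)}+\mathfrak{T}_{T^\alpha}^{(\kappa)}(\sigma,\omega)+\mathfrak{T}_{(T^\alpha)^\ast}^{(\kappa)}(\omega,\sigma)+\mathcal{BICT}_{T^\alpha}(\sigma,\omega)\right).
\]
The task then reduces to absorbing $\mathcal{BICT}_{T^\alpha}(\sigma,\omega)$ into the other terms under the additional $A_\infty$ assumption on one weight (together with the unweighted $L^2$ boundedness hypothesis when $\alpha=0$).

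\emph{Step 2.} To dispose of the $\mathcal{BICT}$ term I would simply compare (\ref{def ind WBP}) with (\ref{RWT}): the trivial estimate $\sqrt{|E|_\sigma|F|_\omega}\leq\sqrt{|Q|_\sigma|Q|_\omega}$ for $E,F\subset Q$ gives
\[
\mathcal{BICT}_{T^\alpha}(\sigma,\omega)\leq\mathfrak{N}_{T^\alpha}^{\mathrm{restricted\ weak}}(\sigma,\omega).
\]
Theorem \ref{restricted weak type} then bounds the right side by $A_2^\alpha(\sigma,\omega)$ under the current hypotheses, and restricting the Poisson integral $\mathcal{P}^\alpha(Q,\sigma)$ to the cube $Q$ gives $\mathcal{P}^\alpha(Q,\sigma)\gtrsim|Q|_\sigma/|Q|^{1-\alpha/n}$, hence $A_2^\alpha(\sigma,\omega)\lesssim\mathcal{A}_2^\alpha(\sigma,\omega)$. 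Thus $\mathcal{BICT}_{T^\alpha}(\sigma,\omega)$ is absorbed into the square root term on the right of (\ref{more}), giving both (\ref{strong type}) and (\ref{more}).

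\emph{Step 3 (reverse direction under ellipticity).} The $\kappa$-cube testing norms are controlled by $\mathfrak{N}_{T^\alpha}(\sigma,\omega)$ simply by testing against $\mathbf{1}_Q m_Q^\beta$, which has $L^2(\sigma)$-norm at most $\sqrt{|Q|_\sigma}$. The one-tailed Muckenhoupt bound $\sqrt{\mathcal{A}_2^\alpha(\sigma,\omega)+\mathcal{A}_2^\alpha(\omega,\sigma)}\lesssim\mathfrak{N}_{T^\alpha}(\sigma,\omega)$ is the standard ellipticity estimate for doubling measures from \cite{SaShUr7}, which lifts the classical $A_2^\alpha$ bound to its one-tailed Poisson version. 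Since the deep analytic work has already been carried out in Theorem \ref{restricted weak type} and in \cite{Saw2}, the present proof is essentially a repackaging; the only point demanding care is the bookkeeping check that the smoothness indices $\kappa_i>\theta_i+\alpha-n$ and the doubling hypotheses here line up exactly with those of the $Tp$ theorem in \cite{Saw2}, so that it can be invoked verbatim.
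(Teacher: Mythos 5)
Your proposal is correct and matches the paper's own argument: the paper likewise obtains Theorem \ref{A infinity theorem} by invoking Theorem 1 of \cite{Saw2}, then noting $\mathcal{BICT}_{T^{\alpha}}(\sigma,\omega)\leq\mathfrak{N}_{T^{\alpha}}^{\mathrm{restricted\ weak}}(\sigma,\omega)\lesssim A_{2}^{\alpha}(\sigma,\omega)\lesssim\mathcal{A}_{2}^{\alpha}(\sigma,\omega)$ via Theorem \ref{restricted weak type} to absorb the $\mathcal{BICT}$ term. Your treatment of the reverse direction under ellipticity also matches the standard argument the paper alludes to.
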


\subsection{Optimal cancellation conditions}

Using the above theorem, we can now remove the compact set $E$ from the
characterization of optimal cancellation conditions in Theorem 5 in \cite%
{Saw2} provided that in addition one of the doubling measures is an $%
A_{\infty }$ weight (and $T^{0}$ is bounded on unweighted $L^{2}\left( 
\mathbb{R}^{n}\right) $ when $\alpha =0$). The proof of the next theorem is
a straightforward modification of the proof of Theorem 5 in \cite{Saw2} (in
turn a straightforward modification of that in Stein \cite[Theorem 4, page
306]{Ste2}), but now using Theorem \ref{A infinity theorem} above.

For $0\leq \alpha <n$, let $T^{\alpha }$ be a continuous linear map from
rapidly decreasing smooth test functions $\mathcal{S}$ to tempered
distributions in $\mathcal{S}^{\prime }$, to which is associated a kernel $%
K^{\alpha }\left( x,y\right) $, defined when $x\neq y$, that satisfies the
inequalities,%
\begin{equation}
\left\vert \partial _{x}^{\beta }\partial _{y}^{\gamma }K^{\alpha }\left(
x,y\right) \right\vert \leq A_{\alpha ,\beta ,\gamma ,n}\left\vert
x-y\right\vert ^{\alpha -n-\left\vert \beta \right\vert -\left\vert \gamma
\right\vert },\ \ \ \ \ \text{for all multiindices }\beta ,\gamma ;
\label{diff ineq}
\end{equation}%
such kernels are called \emph{smooth} $\alpha $-fractional Calder\'{o}%
n-Zygmund kernels on $\mathbb{R}^{n}$. An operator $T^{\alpha }$ is \emph{%
associated} with a kernel $K^{\alpha }$ if, whenever $f\in \mathcal{S}$ has
compact support, the tempered distribution $T^{\alpha }f$ can be identified,
in the complement of the support, with the function obtained by integration
with respect to the kernel, i.e.%
\begin{equation}
T^{\alpha }f\left( x\right) \equiv \int K^{\alpha }\left( x,y\right) f\left(
y\right) d\sigma \left( y\right) ,\ \ \ \ \ \text{for }x\in \mathbb{R}%
^{n}\setminus \func{Supp}f.  \label{identify}
\end{equation}

\begin{theorem}
Let $0\leq \alpha <n$ and $\kappa \in \mathbb{N}$. Suppose that $\sigma $
and $\omega $ are locally finite positive Borel doubling measures on $%
\mathbb{R}^{n}$ with doubling exponent $\theta $, where $\theta +\alpha
-n<\kappa $. Suppose also that the measure pair $\left( \sigma ,\omega
\right) $ satisfies the one-tailed conditions (\ref{one-tailed}) of
Muckenhoupt type, and that in addition, one of the measures is an $A_{\infty
}$ weight. Suppose finally that $K^{\alpha }\left( x,y\right) $ is a smooth $%
\alpha $-fractional Calder\'{o}n-Zygmund kernel on $\mathbb{R}^{n}$. In the
case $\alpha =0$, we also assume there is $T^{0}$ associated with the kernel 
$K^{0}$ that is bounded on unweighted $L^{2}\left( \mathbb{R}^{n}\right) $.%
\newline
Then there exists a bounded operator $T^{\alpha }:L^{2}\left( \sigma \right)
\rightarrow L^{2}\left( \omega \right) $, that is associated with the kernel 
$K^{\alpha }$ in the sense that (\ref{identify}) holds, \emph{if and only if}
there is a positive constant $\mathfrak{A}_{K^{\alpha }}^{\kappa }\left(
\sigma ,\omega \right) $ so that%
\begin{eqnarray}
&&\int_{\left\vert x-x_{0}\right\vert <N}\left\vert \int_{\varepsilon
<\left\vert x-y\right\vert <N}K^{\alpha }\left( x,y\right) \frac{p\left(
y\right) }{\left\Vert \mathbf{1}_{B\left( x_{0},N\right) }p\right\Vert
_{\infty }}d\sigma \left( y\right) \right\vert ^{2}d\omega \left( x\right)
\leq \mathfrak{A}_{K^{\alpha }}^{\kappa }\left( \sigma ,\omega \right) \
\int_{\left\vert x_{0}-y\right\vert <N}d\sigma \left( y\right) ,
\label{can cond} \\
&&\text{for all polynomials }p\text{ of degree less than }\kappa \text{, all 
}0<\varepsilon <N\text{ and }x_{0}\in \mathbb{R}^{n},  \notag
\end{eqnarray}%
along with a similar inequality with constant $\mathfrak{A}_{K^{\alpha ,\ast
}}^{\kappa }\left( \omega ,\sigma \right) $, in which the measures $\sigma $
and $\omega $ are interchanged and $K^{\alpha }\left( x,y\right) $ is
replaced by $K^{\alpha ,\ast }\left( x,y\right) =K^{\alpha }\left(
y,x\right) $. Moreover, if such $T^{\alpha }$ has minimal norm, then 
\begin{equation}
\left\Vert T^{\alpha }\right\Vert _{L^{2}\left( \sigma \right) \rightarrow
L^{2}\left( \omega \right) }\lesssim \mathfrak{A}_{K^{\alpha }}^{\kappa
}\left( \sigma ,\omega \right) +\mathfrak{A}_{K^{\alpha ,\ast }}^{\kappa
}\left( \omega ,\sigma \right) +\sqrt{\mathcal{A}_{2}^{\alpha }\left( \sigma
,\omega \right) +\mathcal{A}_{2}^{\alpha }\left( \omega ,\sigma \right) },
\label{can char}
\end{equation}%
with implied constant depending on $C_{CZ}$, $\theta $, $\kappa $ and the $%
A_{\infty }$ parameters of the $A_{\infty }$ weight. If $T^{\alpha }$ is
elliptic the inequality can be reversed.
\end{theorem}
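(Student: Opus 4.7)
The argument mirrors the structure of Theorem 5 in \cite{Saw2} (itself patterned on Stein \cite[Theorem 4, page 306]{Ste2}), with the decisive boundedness step now supplied by Theorem \ref{A infinity theorem}. Introduce the truncated kernels $K^\alpha_{\varepsilon,N}(x,y) \equiv K^\alpha(x,y)\,\mathbf{1}_{\{\varepsilon<|x-y|<N\}}$ and the associated bounded operators $T^\alpha_{\varepsilon,N}$. The differential inequalities (\ref{diff ineq}) imply each $T^\alpha_{\varepsilon,N}$ is a standard $(\kappa_1+\delta,\kappa_2+\delta)$-smooth $\alpha$-fractional CZ operator with constants independent of $\varepsilon,N$; in the case $\alpha=0$, unweighted $L^2$-boundedness is inherited uniformly from $T^0$ via the maximal truncation theory already used in the proof of Theorem \ref{restricted weak type}. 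The aim is to apply Theorem \ref{A infinity theorem} to each $T^\alpha_{\varepsilon,N}$ with uniform constants and then extract a weak-$\ast$ limit to produce the desired operator.

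The key step is to derive the polynomial $\kappa$-cube testing (\ref{def Kappa polynomial}) for each $T^\alpha_{\varepsilon,N}$ directly from the cancellation hypothesis (\ref{can cond}). For a cube $Q$ and a multiindex $\beta$ with $|\beta|<\kappa$, I split $\mathbf{1}_Q m_Q^\beta = m_Q^\beta - \mathbf{1}_{Q^c}m_Q^\beta$. Applying (\ref{can cond}) with $p = m_Q^\beta$, $x_0 = c_Q$ and $N \sim \ell(Q)$, and using doubling of $\sigma$ and $\omega$ to pass between concentric balls and $Q$, controls the contribution of the unlocalized piece $m_Q^\beta$. The contribution of the tail $\mathbf{1}_{Q^c}m_Q^\beta$, a bounded function supported off $Q$, is dominated by the one-tailed Muckenhoupt conditions (\ref{one-tailed}) together with the kernel size bound in (\ref{diff ineq}), the doubling-exponent hypothesis $\theta+\alpha-n<\kappa$ ensuring summability over the dyadic shells about $Q$. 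The dual testing estimate is obtained analogously from the parallel cancellation hypothesis on $K^{\alpha,\ast}$. Theorem \ref{A infinity theorem} then delivers
\begin{equation*}
\mathfrak{N}_{T^\alpha_{\varepsilon,N}}(\sigma,\omega) \;\lesssim\; \sqrt{\mathcal{A}_2^\alpha(\sigma,\omega)+\mathcal{A}_2^\alpha(\omega,\sigma)} + \mathfrak{A}_{K^\alpha}^\kappa(\sigma,\omega) + \mathfrak{A}_{K^{\alpha,\ast}}^\kappa(\omega,\sigma),
\end{equation*}
uniformly in $\varepsilon,N$. A standard weak-$\ast$ compactness extraction along $\varepsilon\downarrow 0$, $N\uparrow\infty$ produces a bounded $T^\alpha:L^2(\sigma)\to L^2(\omega)$ satisfying (\ref{can char}), and its association with $K^\alpha$ in the sense of (\ref{identify}) follows from the pointwise convergence $T^\alpha_{\varepsilon,N}f(x)\to\int K^\alpha(x,y)f(y)\,d\sigma(y)$ for $f\in\mathcal{S}$ compactly supported and $x\notin\operatorname{Supp}f$.

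For necessity, assume $T^\alpha:L^2(\sigma)\to L^2(\omega)$ is bounded and associated with $K^\alpha$. Given $x_0\in\mathbb{R}^n$, $0<\varepsilon<N$, and a polynomial $p$ of degree less than $\kappa$, set $f \equiv p\,\mathbf{1}_{B(x_0,N)}/\|p\,\mathbf{1}_{B(x_0,N)}\|_\infty$, so that $\|f\|_{L^2(\sigma)}^2 \leq |B(x_0,N)|_\sigma$. For $x\in B(x_0,N)$ the kernel representation (\ref{identify}) identifies the inner integral in (\ref{can cond}) with a properly truncated version of $T^\alpha_\sigma f(x)$; the remainder from the missing annuli $\{|x-y|\leq\varepsilon\}$ and $\{|x-y|\geq N\}$ is controlled by the maximal-truncation weak-type estimates from the proof of Theorem \ref{restricted weak type}. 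Squaring, integrating against $\omega$ over $B(x_0,N)$, and applying the $L^2(\sigma)\to L^2(\omega)$ bound to $T^\alpha_\sigma f$ then yields (\ref{can cond}) with constant controlled by $\|T^\alpha\|_{L^2(\sigma)\to L^2(\omega)}$.

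The main obstacle is the conversion from the unlocalized cancellation hypothesis (\ref{can cond}) to the localized polynomial cube testing (\ref{def Kappa polynomial}) uniformly in $\varepsilon,N$: the splitting $\mathbf{1}_Q m_Q^\beta = m_Q^\beta - \mathbf{1}_{Q^c}m_Q^\beta$ is natural, but the tail term requires a careful accounting of the kernel decay (\ref{diff ineq}) against the $\sigma$-mass of dyadic shells about $Q$, making essential use of (\ref{one-tailed}) and the doubling-exponent bound $\theta+\alpha-n<\kappa$ to close the sum. A secondary delicate point, in the sufficiency direction, is ensuring the weak-$\ast$ limit of $\{T^\alpha_{\varepsilon,N}\}$ is associated with $K^\alpha$ in the distributional sense (\ref{identify}), which requires combining density of compactly supported test functions with the pointwise kernel bounds.
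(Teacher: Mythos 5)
Your proof plan follows the same route the paper intends: the paper explicitly states that this theorem is ``a straightforward modification of the proof of Theorem 5 in \cite{Saw2} (in turn a straightforward modification of that in Stein \cite[Theorem 4, page 306]{Ste2}), but now using Theorem \ref{A infinity theorem} above,'' and your outline---truncate, convert (\ref{can cond}) into the $\kappa$-cube testing conditions (\ref{def Kappa polynomial}), apply Theorem \ref{A infinity theorem} uniformly in the truncations, extract a weak-$\ast$ limit, and reverse the implication via the kernel representation---is exactly that modification. You also correctly pinpoint the places where the doubling exponent hypothesis $\theta + \alpha - n < \kappa$ and the one-tailed conditions (\ref{one-tailed}) enter (the dyadic-shell tail estimates), and where the difference between sharp and smooth truncations must be absorbed.

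Two points of phrasing could mislead a reader. First, the decomposition $\mathbf{1}_{Q}m_{Q}^{\beta}=m_{Q}^{\beta}-\mathbf{1}_{Q^{c}}m_{Q}^{\beta}$ involves the unbounded polynomial $m_{Q}^{\beta}$; this is only meaningful after the truncation $\{\varepsilon<|x-y|<N\}$ with $N\sim\ell(Q)$ has localized the integral, so that ``the unlocalized piece'' is really the truncated integral of the polynomial that (\ref{can cond}) controls directly---you should make that conditioning explicit, and you should also remark that when the truncation radius $N'$ exceeds $\operatorname{diam}(Q)$, the upper truncation is vacuous on $\mathbf{1}_{Q}m_{Q}^{\beta}$ for $x\in Q$, which is why taking $N\sim\ell(Q)$ in (\ref{can cond}) is the only scale needed. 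Second, in the necessity direction the error terms between the truncated integral in (\ref{can cond}) and $T^{\alpha}_{\sigma}f(x)$ need an $L^{2}$ bound (not merely the weak-type estimates you invoke); the standard device is to bound these differences by Poisson-type tails of $\sigma$ controlled by $\mathcal{A}_{2}^{\alpha}$. Neither issue is a structural gap, but both should be stated carefully.
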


\section{Concluding remarks}

The problem investigated in this paper and \cite{Saw2} is that of \emph{%
fixing a measure pair} $\left( \sigma ,\omega \right) $, and then asking for
a characterization of the $\alpha $-fractional $CZO$'s $T^{\alpha }$ that
are bounded from $L^{2}\left( \sigma \right) $ to $L^{2}\left( \omega
\right) $ - the first solution being the one weight case of Lebesgue measure
with $\alpha =0$ in \cite{DaJo}. This problem is in a sense `orthogonal' to
other recent investigations of two weight norm inequalities, in which one 
\emph{fixes the elliptic operator }$T^{\alpha }$, and asks for a
characterization of the weight pairs $\left( \sigma ,\omega \right) $ for
which $T^{\alpha }$ is bounded.

This latter investigation for a fixed operator is extraordinarily difficult,
with essentially just \textbf{one} CZ operator $T^{\alpha }$ known to have a
characterization of the weight pairs $\left( \sigma ,\omega \right) $,
namely the Hilbert transform on the line, see the two part paper \cite%
{LaSaShUr3};\cite{Lac}, and also \cite{SaShUr10} for an extension to
gradient elliptic operators on the line. In particular, matters appear to be
very bleak in higher dimensions due to the example in \cite{Saw1} which
shows that the energy side condition, used in virtually all attempted
characterizations, fails to be necessary for even the most basic elliptic
operators - the stronger pivotal condition is however shown in \cite{LaLi}
to be necessary for boundedness of the $g$-function, a \emph{Hilbert space }%
valued $CZO$ with a strong gradient positivity property, and the weight
pairs were then characterized in \cite{LaLi} by a single testing condition%
\footnote{%
The testing condition (1.3) in \cite{LaLi} implies the weights share no
common point masses, and then an argument in \cite{LaSaUr1} using the
asymmetric $A_{2}$ condition of Stein shows that the $A_{2}$ condition is
implied by the testing condition. Thus (1.3) can be dropped from the
statement of Theorem 1.2.}.

On the other hand, the problem for a fixed measure pair has proved more
tractable in that there are now many weight pairs $\left( \sigma ,\omega
\right) $ for which the characterization of boundedness of operators is
known. However, the techniques required for these results are taken largely
from investigations of the problem where the operator is fixed. In
particular, an adaptation of the `pivotal' argument in \cite{NTV4} to the
weighted Alpert wavelets in \cite{RaSaWi}, and a Parallel Corona
decomposition from \cite{LaSaShUr4} are used.

The questions of relaxing the $\kappa $-Cube Testing conditions, and the
side conditions of doubling and $A_{\infty }$, both remain open. There is in
fact no known example of a $CZO$ for which the $T1$ theorem fails.

In the case $\alpha =0$, there is the problem analogous to the celebrated `$%
A_{2}$ conjecture' solved in general in \cite{Hyt}, of determining the
optimal dependence of the above estimates on the $A_{2}$ characteristic. In
particular the dependence for the restricted weak type inequality should
follow using the pigeonholing and corona construction introduced in \cite%
{LaPeRe} and used in \cite{Hyt}.

\end{document}